\newcommand{\real}{\ensuremath{\mathbb{R}}}
\newcommand{\rn}{\ensuremath{\mathbb{R}^n}}
\newcommand{\nat}{\ensuremath{\mathbb{N}}}
\newcommand{\no}{\ensuremath{\nat_0}}
\newcommand{\zn}{\ensuremath{\mathbb{Z}^n}}
\newcommand{\dint}{\ensuremath{\mathrm{d}}}
\newcommand{\supp}{\ensuremath{\operatorname{supp}}}
\newcommand{\id}{\ensuremath{\operatorname{id}}}
\newcommand{\rank}{\ensuremath{\operatorname{rank}}}
\newcommand{\spa}{\ensuremath{\operatorname{span}}}
\newcommand{\dom}{\ensuremath{\operatorname{dom}}}
\newcommand{\Lloc}{\ensuremath{L_1^{\mathrm{loc}}}}
\newcommand{\Com}{\ensuremath{C^m_0}}
\newcommand{\Emp}{\ensuremath{E^m_{p,\sigma}(B)}}
\newcommand{\Dd}{\ensuremath{\mathrm{D}}}
\newcommand{\vol}{\ensuremath{\operatorname{vol}}}
\newcommand{\bli}{\begin{list}{}{\labelwidth1.7em\leftmargin2.1em}}
\newcommand{\eli}{\end{list}}
\newtheorem{lemma}{Lemma}[section]
\newtheorem{prop}[lemma]{Proposition}
\newtheorem{theo}[lemma]{Theorem}
\theoremstyle{definition}
\newtheorem{defi}[lemma]{Definition}
\theoremstyle{remark}
\newtheorem{rem}[lemma]{Remark}
\numberwithin{equation}{section}
\begin{document}


\title[Entropy and approximation numbers via bracketing]{Entropy and approximation numbers of weighted Sobolev spaces via bracketing}

\author{Therese Mieth}
\address{Institut f\"ur Mathematik, Fakult\"at f\"ur Mathematik und Informatik, Friedrich-Schiller-Universit\"at Jena, 07737 Jena, Germany}
\email{therese.mieth@uni-jena.de}

\begin{abstract}
We investigate the asymptotic behaviour of entropy and approximation numbers of the compact embedding $\id: E^m_{p,\sigma}(B)\hookrightarrow L_p(B)$, $1\leq p<\infty,$ defined on the unit ball $B$ in $\rn$. Here $E^m_{p,\sigma}(B)$ denotes a Sobolev space with a power weight perturbed by a logarithmic function. The weight contains a singularity at the origin. Inspired by Evans and Harris \cite{EH93}, we apply a bracketing technique which is an analogue to that of Dirichlet-Neumann-bracketing used by Triebel in \cite{Tri12} for $p=2.$
\end{abstract}

\keywords{
 Approximation numbers, Entropy numbers, Weighted Sobolev spaces 
}



\maketitle

\section{Introduction}
This paper deals with the compact embedding 
\begin{equation}\label{emb}
\id: \Emp\hookrightarrow L_p(B),\hspace{1cm} 1\leq p<\infty, m\in\nat, \sigma>0,
\end{equation}
where $E^m_{p,\sigma}(B)$ is the closure of $C_0^m(B)$ with respect to the norm
\begin{equation}
\Vert f|\Emp\Vert := \left( \int_B |x|^{mp}(1+|\log|x||)^{\sigma p} \sum_{|\alpha|=m}|\Dd^\alpha f(x)|^p
\dint x\right)^{1/p}
\end{equation}
and $B=\{x\in\rn: |x|<1\}$ is the unit ball. Problems of this type have been considered so far in \cite{Tri12, Mi15a, Mi15b}. In case of Hilbert spaces Triebel obtained in \cite{Tri12} sharp results for the corresponding entropy and approximation numbers. Namely, if $p=2$ it holds for $k\in\nat,k\geq 2,$ that
\begin{equation}\label{1.2}
a_k(\id)\sim e_k(\id)\sim
\begin{cases} 
k^{-\frac{m}{n}}\hspace{1cm}&  \text{ if } \sigma>\frac{m}{n}\\
{k}^{-\frac{m}{n}}\,(\log k)^{\frac{m}{n}}\hspace{1cm}&  \text{ if } \sigma=\frac{m}{n}\\
k^{-\sigma}\hspace{1cm}&  \text{ if } 0<\sigma<\frac{m}{n}.
\end{cases}
\end{equation}
We will extend this result and confirm Triebel's Conjecture 3.8 in \cite{Tri12} that \eqref{1.2} holds for all $1\leq p<\infty.$ In \cite{Tri12} the so called Courant-Weyl method of Dirichlet-Neumann-bracketing is used. This technique is not available for $p\neq2$, but a partial analogue was established by Evans and Harris in \cite{EH93}. These authors deal with Sobolev spaces $W^1_p(\Omega)$ on a wide class of domains, i.e. rooms and passages domains or generalised ridged domains. We want to transfer this idea to control the singularity in our situation.

The paper is organised as follows. In Section \ref{sec_prelim} we collect basic notation and briefly introduce the setting of the compact embedding \eqref{emb}.
In Section \ref{sec_ak} we present a bracketing method to determine the asymptotic behaviour of the number
\begin{equation*}
\nu_0(\varepsilon, \Omega):= \max \{k \in\nat: a_k(\id_\Omega)\geq \varepsilon\},\hspace{1cm} \varepsilon>0,
\end{equation*}
as $\varepsilon\to 0$. The operator $\id_\Omega$ denotes the restriction of $\id$ to a subset $\Omega\subset B.$  Let $\Omega=\Big(\bigcup\limits_{j=1}^J \overline{\Omega}_j\Big)^{\circ}$ with disjoint domains $\Omega_j$. The essence of the method is the bracketing property stated in Proposition \ref{prop1}, which reads as
\begin{equation}\label{intro_brack}
\sum\limits_{j=1}^J \mu_0(\varepsilon, \Omega_j)
\leq
\nu_0(\varepsilon, \Omega)
\leq 
\sum\limits_{j=1}^J \nu_0(\varepsilon, \Omega_j).
\end{equation}
Here the number $\mu_0(\varepsilon, \Omega),\,\Omega\subseteq B,$ is defined by
\begin{equation*}
\mu_0(\varepsilon, \Omega)
:= 
\max\, \Big\{\,\dim S:\alpha(S)= \sup_{u\in S\setminus\{ 0\}}\dfrac{\Vert u| E^m_{p,\sigma}(\Omega)\Vert}{\Vert u| L_p(\Omega)\Vert}\leq \frac{1}{\varepsilon}\,\Big\}. 
\end{equation*}
where the maximum is taken over all finite-dimensional linear subspaces $S$ of $E^m_{p,\sigma}(\Omega)$. This approach attempts to mimic the Dirichlet-Neumann bracketing described in \cite{Mi15a,Mi15b,Tri12} if $p=2$. This is emphasised by Proposition \ref{p=2}. We use \eqref{intro_brack} to cut off the singularity of the weight $b_{m,\sigma}(x)=|x|^{mp}(1+|\log|x||)^{\sigma p},\ x\in B,$ at the origin and consider the corresponding domains separately. In this way we achieve in Proposition \ref{nu_0}
\begin{equation}
\nu_0(\varepsilon, B)
\sim
\begin{cases} 
\varepsilon^{-\frac{n}{m}} &\text{ if } \sigma>\frac{m}{n}\\
\varepsilon^{-\frac{n}{m}}|\log\varepsilon| &\text{ if } \sigma=\frac{m}{n}\\
\varepsilon^{-\frac{1}{\sigma}} &\text{ if } 0<\sigma<\frac{m}{n}
\end{cases}
\end{equation}
for $\varepsilon\searrow 0$. Considering the approximation numbers $a_k(\id)$, this fits exactly to \eqref{1.2} now valid for $1\leq p<\infty$ as stated in Theorem \ref{Theo_ak}. In Section \ref{sec_ek} we transfer the last results to entropy numbers. We get in Theorem \ref{theo_ek}  for $k\in\nat,k\geq 2,$
\begin{equation}
e_k(\id)\sim
\begin{cases} 
k^{-\frac{m}{n}}\hspace{1cm}&  \text{ if } \sigma>\frac{m}{n}\\
{k}^{-\frac{m}{n}}\,(\log k)^{\frac{m}{n}}\hspace{1cm}&  \text{ if } \sigma=\frac{m}{n}\\
k^{-\sigma}\hspace{1cm}&  \text{ if } 0<\sigma<\frac{m}{n}.
\end{cases}
\end{equation}
Thereby \cite[Section 1.3.3]{ET96} provides upper bounds. Lower bounds can be constructed by using similar basis functions as in Section \ref{sec_ak}. Our main results are Theorem \ref{Theo_ak} and Theorem \ref{theo_ek}.

\section{Preliminaries}\label{sec_prelim}

We use standard notation. $\nat$ is the set of all natural numbers and $\no = \nat\cup\{0\}$, $\real$ is the set of all real numbers and $\rn,n\in\nat,$ is the Euclidean $n-$space. For two positive real sequences 
$\{\alpha_k\}_{k\in \nat}$, $\{\beta_k\}_{k\in \nat}$ or two positive functions $\phi(x)$, $\psi(x)$ we mean by 
 \[ \alpha_k\sim \beta_k \hspace{0.3cm}\text{or}\hspace{0.3cm}{\phi(x)\sim\psi(x)}\] that there exist constants $c_1,c_2>0$ such that for all $k\in\nat$ and all $x$ 
\[
c_1\ \alpha_k\leq \beta_k\leq c_2 \ \alpha_k\hspace{0.3cm}\text{or}\hspace{0.3cm}  c_1\ \phi (x)\leq\psi(x)\leq c_2\ \phi(x).
\] 
A complex-valued Lebesgue measurable function $u$, defined on $(0,\infty)$, belongs to $\Lloc([0,\infty))$ if  $u$ is Lebesgue integrable on each interval $(0,a)$ for $a>0.$ Let $\Omega$ be a smooth bounded domain in $\rn$. Recall that $\Com(\Omega)$ collects all complex-valued functions $f$ on $\rn$ having classical derivatives up to order $m\in\no$ with compact support $\supp f\subset\Omega$. Let $L_p(\Omega)$ with $1\leq p<\infty$ be the Lebesgue space of all complex-valued Lebesgue measurable functions on $\Omega$ such that
\begin{equation*}
\Vert f \vert L_p(\Omega)\Vert = \Big(\int_\Omega |f(x)|^p\dint x\Big)^{1/p}<\infty.
\end{equation*}
Furthermore, $D(\Omega)=C^\infty_0(\Omega)$ and the set of all complex distributions $D'(\Omega)$ have the usual meaning. $L_p(\Omega)$ and all other spaces introduced below are considered in the standard setting of $D'(\Omega)$.\\
Let $\overset{\circ\hspace{0.3cm} }{W^m_p}(\Omega)$ be the completion of $C^m_0(\Omega)$ with respect to the norm 
\begin{equation*}
\big\Vert f\, |\, \overset{\circ\hspace{0.3cm} }{W^m_p}(\Omega)\big\Vert 
= 
\left( \int_\Omega \sum_{|\alpha|=m}|\Dd^\alpha f(x)|^p
\dint x\right)^{1/p}.
\end{equation*}
For a linear and bounded operator $T\in\mathcal{L}(X,Y)$, acting between two complex quasi Banach spaces $X$ and $Y$, we define for $k\in\nat$ the \textit{$k$-th entropy number} $e_k(T)$ by 
\begin{equation*}
e_k(T):=\inf\big\{\varepsilon>0:\ \exists\, y_1,...y_{2^{k-1}}\in Y: T(B_X)\subseteq \bigcup\limits_{i=1}^{2^{k-1}} B_Y(y_i,\varepsilon)\big\}.
\end{equation*}
Here $B_X$ stands for the unit ball in $X$ and $B_Y(y_i,\varepsilon)$ is the set of all $y\in Y$ such that $\Vert y- y_i\Vert_Y<\varepsilon.$
The \textit{$k$-th approximation number} $a_k(T)$ is defined by
\begin{equation*}
a_k(T):=\inf\big\{\Vert T-S\Vert_{X\to Y}:\ S\in\mathcal{L}(X,Y), \text{ rank } S < k\big\}
\end{equation*} 
where $\text{rank } S = \dim S(X)$. We assume that the reader is familiar with entropy numbers, approximation numbers and their relations to the theory of function spaces. As for details, properties and historical comments, we refer the reader to \cite{CS90,EE87,ET96,HT08,Pi78,Pi87}.

Let $B=\{x\in\rn:  |x|<1\}$ be the unit ball in $\rn.$ We consider weighted Sobolev spaces on $B$ defined as follows.

\begin{defi}
Let $1\leq p<\infty$, $m\in\nat$ and $\sigma\geq 0$. Then $\Emp$ is the closure of $C_0^m(B)$ with respect to the norm 
\begin{equation}\label{def_norm}
\Vert f|\Emp\Vert = \bigg( \int_B |x|^{mp}(1+|\log|x||)^{\sigma p} \sum_{|\alpha|=m}|\Dd^\alpha f(x)|^p
\dint x\bigg)^{1/p}.
\end{equation}
\end{defi}

This Definition is justified by standard arguments. In fact, $E^m_{p,\sigma}(B)$ is the collection of all functions in $L_p(B)$ that are  limit elements of convergent sequences of functions in $C^m_0(B)$ in the norm \eqref{def_norm}. 
Furthermore, $\Emp$ is a Banach spaces (distributionally interpreted). We consider the continuous embedding
\begin{equation}
\id: \Emp\hookrightarrow L_p(B), \hspace{1cm} 1\leq p<\infty, m\in\nat, \sigma\geq 0.
\end{equation} 
For the justification of the continuity, we recall the (respectively rewritten) Hardy inequality \cite[Theorem 2.4]{Tri12}, or rather \cite[Corollary 1.8]{Mi15a}. Let $B^\delta=\{x\in\rn: |x|<\delta\},\,\delta>0.$

\begin{prop}\label{HaEq}
Let $m_1,m_2\in\no$ with $m_1\leq m_2$, $\sigma\geq 0$ and $1\leq p<\infty$. 
Then there are numbers $0<\delta<1$ and $c>0$ such that
\begin{align}\notag
\int_{\rn}& |x|^{m_1p}(1+|\log|x||)^{\sigma p}\sum_{|\alpha|=m_1}|\Dd^\alpha f(x)|^p\dint x\\\label{Heq_ad1}
&\leq
c\,\int_{\rn}|x|^{m_2p}(1+|\log|x||)^{\sigma p} \sum_{|\alpha|=m_2}|\Dd^\alpha f(x)|^p \dint x
\end{align}
for all $f\in C^{m_2}_0(B^\delta)$. In particular, if $m\in\nat$ then
\begin{equation}\label{Heq_ad2}
\int_{\rn}(1+|\log|x||)^{\sigma p}|f(x)|^p\dint x\ \leq\ c\,\int_{\rn} |x|^{mp}(1+|\log|x||)^{\sigma p}\sum_{|\alpha|=m}|\Dd^\alpha f(x)|^p \dint x
\end{equation}
for all $f\in C^{m}_0(B^\delta)$.
\end{prop}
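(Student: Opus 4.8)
The strategy is to reduce \eqref{Heq_ad1} to a one-dimensional weighted Hardy inequality. First, iterating the case $m_2=m_1+1$ gives the general case $m_1\le m_2$, and \eqref{Heq_ad2} is then the instance $m_1=0$, $m_2=m$. Hence it is enough to produce $0<\delta<1$ and $c>0$ with
\begin{equation*}
\int_{\rn}|x|^{jp}(1+|\log|x||)^{\sigma p}\sum_{|\alpha|=j}|\Dd^\alpha f(x)|^p\dint x\ \le\ c\int_{\rn}|x|^{(j+1)p}(1+|\log|x||)^{\sigma p}\sum_{|\alpha|=j+1}|\Dd^\alpha f(x)|^p\dint x
\end{equation*}
for every $j\in\no$ and every $f\in C^{j+1}_0(B^\delta)$. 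Fixing a multi-index $\gamma$ with $|\gamma|=j$ and writing $g=\Dd^\gamma f\in C^1_0(B^\delta)$, each partial derivative $\partial_i g$ equals some $\Dd^\alpha f$ with $|\alpha|=j+1$; so, summing over all such $\gamma$ and absorbing a combinatorial constant depending only on $n$ and $p$, the displayed inequality follows from
\begin{equation*}
\int_{\rn}|x|^{jp}\,w(x)\,|g(x)|^p\dint x\ \le\ c\int_{\rn}|x|^{(j+1)p}\,w(x)\,|\nabla g(x)|^p\dint x,\qquad w(x):=(1+|\log|x||)^{\sigma p},
\end{equation*}
valid for all $g\in C^1_0(B^\delta)$.

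To establish this last inequality I pass to polar coordinates $x=r\omega$, $r>0$, $\omega\in S^{n-1}$. Since $g$ has compact support inside $B^\delta$, the fundamental theorem of calculus along the ray in direction $\omega$ gives $|g(r\omega)|\le\int_r^\delta|(\nabla g)(s\omega)|\dint s$. For fixed $\omega$ this is precisely the dual Hardy operator applied to the nonnegative function $s\mapsto|(\nabla g)(s\omega)|$; integrating the resulting bound over $\omega\in S^{n-1}$ afterwards, the matter reduces to the classical one-dimensional weighted Hardy inequality
\begin{equation*}
\int_0^\delta\Big(\int_r^\delta h(s)\dint s\Big)^p V(r)\dint r\ \le\ C\int_0^\delta h(r)^p\,W(r)\dint r,\qquad h\ge 0,
\end{equation*}
for the weights $V(r)=r^{jp+n-1}(1+|\log r|)^{\sigma p}$, $W(r)=r^{(j+1)p+n-1}(1+|\log r|)^{\sigma p}$, with $C$ independent of $\omega$.

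The one genuinely delicate step is to verify the Muckenhoupt condition for this weight pair, namely, for $1<p<\infty$,
\begin{equation*}
\sup_{0<r<\delta}\Big(\int_0^r V(t)\dint t\Big)^{1/p}\Big(\int_r^\delta W(t)^{-1/(p-1)}\dint t\Big)^{(p-1)/p}<\infty,
\end{equation*}
and, for $p=1$, that $W(t)^{-1}\int_0^t V(r)\dint r$ stays bounded on $(0,\delta)$. Since $jp+n-1>-1$, one has $\int_0^r V(t)\dint t\sim r^{jp+n}(1+|\log r|)^{\sigma p}$ as $r\to0^+$, while the exponent $-\tfrac{(j+1)p+n-1}{p-1}$ of $t$ in $W(t)^{-1/(p-1)}$ is strictly less than $-1$, whence $\int_r^\delta W(t)^{-1/(p-1)}\dint t\sim r^{-(jp+n)/(p-1)}(1+|\log r|)^{-\sigma p/(p-1)}$. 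Forming the product, the powers of $r$ cancel and the powers of $1+|\log r|$ cancel as well, and near $r=\delta$ no singularity appears; hence the supremum is finite, and the $p=1$ case is analogous. Tracking this exact cancellation of the logarithmic perturbation is the crux of the proof; the remaining steps are routine. Alternatively, \eqref{Heq_ad1} and \eqref{Heq_ad2} coincide with \cite[Theorem~2.4]{Tri12} and \cite[Corollary~1.8]{Mi15a} and may simply be invoked.
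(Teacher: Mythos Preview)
The paper does not prove this proposition at all; it merely restates \cite[Theorem~2.4]{Tri12} (respectively \cite[Corollary~1.8]{Mi15a}) and invokes it. Your proposal goes beyond this by actually sketching the underlying argument: iterate the step $m_1\mapsto m_1+1$, reduce along rays to the one-dimensional dual Hardy inequality, and verify the Muckenhoupt weight condition for the pair $V(r)=r^{jp+n-1}(1+|\log r|)^{\sigma p}$, $W(r)=r^{(j+1)p+n-1}(1+|\log r|)^{\sigma p}$. The computation you carry out is correct --- the powers of $r$ and of $1+|\log r|$ cancel exactly in the Muckenhoupt supremum, which is indeed the heart of the matter --- and this is essentially the route taken in the cited references, so you are supplying the proof the paper omits rather than departing from it. Your closing remark that one may simply cite \cite{Tri12,Mi15a} matches precisely what the paper does.
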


In \cite[Theorem 3.3]{Tri12} it is shown that the embedding 
\[
\id: \Emp\hookrightarrow L_p(B) \text{ is compact if, and only if, }\sigma>0.
\]
We also refer to \cite[Proposition 2.5]{Mi15a} for details.

\section{Approximation numbers via \texorpdfstring{$L_p$}{Lp}-bracketing}\label{sec_ak}

We want to establish a bracketing method to estimate the asymptotic behaviour of the approximation numbers. Therefore we follow the idea of Evans and Harris \cite{EH93}, see also \cite[Chap. 6.3]{EE04}. 
We denote by 
\begin{equation}
\id_\Omega: E^m_{p,\sigma}(\Omega)\hookrightarrow L_p(\Omega)
\end{equation}
the restriction of $\id: \Emp\hookrightarrow L_p(B)$ to subsets $\Omega\subseteq B.$ We introduce the following quantities $\nu_0(\varepsilon, \Omega)$ and $\mu_0(\varepsilon, \Omega)$ taking over the notation from \cite{EH93}.

\begin{defi}\label{def_brack}
Let $1\leq p<\infty,m\in\nat,\sigma>0$ and $\varepsilon > 0$. We define 
\begin{equation}\label{def_nu}
\nu_0(\varepsilon, \Omega):= \max \{k \in\nat: a_k(\id_\Omega)\geq \varepsilon\}
\end{equation}
and put $\nu_0(\varepsilon, \Omega)= 0$ if $a_k<\varepsilon$ for all $k\in\nat.$ Furthermore, let
\begin{equation}\label{def_mu}
\mu_0(\varepsilon, \Omega)
:= 
\max\, \Big\{\,\dim S:\alpha(S):= \sup_{u\in S\setminus\{ 0\}}\dfrac{\Vert u| E^m_{p,\sigma}(\Omega)\Vert}{\Vert u| L_p(\Omega)\Vert}\leq \frac{1}{\varepsilon}\,\Big\} 
\end{equation}
where the maximum is taken over all finite-dimensional linear subspaces $S$ of $E^m_{p,\sigma}(\Omega).$
\end{defi}

\begin{rem} 
Due to the compactness of the embedding $\id$ we can assume that the approximation numbers tend to zero and hence the maximum in \eqref{def_nu} is attained for some natural number $N=N(\varepsilon).$ We will see in Proposition \ref{prop1} that this implies $\mu_0(\varepsilon,\Omega)<\infty$ for every $\varepsilon>0.$

The embedding $\id: E^m_{p,\sigma}(\Omega)\hookrightarrow L_p(\Omega)$ is injective. Thus for every finite-dimensional linear subspace $S\subset E^m_{p,\sigma}(\Omega)$ the restriction 
$
\id^S: S \to \id (S)
$
is bijective and bounded. We have
\begin{equation*}
\Vert (\id^S)^{-1}\Vert = \alpha(S)<\infty.
\end{equation*}
Clearly, 
\[
\nu_0(\varepsilon, \Omega)\to \infty
\hspace{1cm} \text{as } \varepsilon\to 0
\] 
describes the asymptotic behaviour of $a_k(\id_\Omega)\to 0$ as $k\to\infty.$ So the main concern of this section is to obtain upper and lower bounds for $\nu_0(\varepsilon, B)$.
\end{rem}

The essential tool is the following bracketing property.

\begin{prop}\label{prop1}
Let $\Omega=\Big(\bigcup\limits_{j=1}^J \overline{\Omega}_j\Big)^{\circ}$ with disjoint domains $\Omega_j$.\\ 
Then for $\varepsilon>0$ it holds
\begin{equation}\label{brack}
\sum\limits_{j=1}^J \mu_0(\varepsilon, \Omega_j)
\leq
\mu_0(\varepsilon, \Omega)
\leq
\nu_0(\varepsilon, \Omega)
\leq 
\sum\limits_{j=1}^J \nu_0(\varepsilon, \Omega_j).
\end{equation}
\end{prop}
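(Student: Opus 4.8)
The plan is to establish the four inequalities in \eqref{brack} separately, working from the middle outward. The middle inequality $\mu_0(\varepsilon,\Omega)\le\nu_0(\varepsilon,\Omega)$ is the heart of the matter and should be proved first: given any admissible subspace $S\subset E^m_{p,\sigma}(\Omega)$ of dimension $k$ with $\alpha(S)\le 1/\varepsilon$, I want to show $a_k(\id_\Omega)\ge\varepsilon$. Suppose not, so that there is an operator $R$ with $\rank R<k$ and $\Vert\id_\Omega-R\Vert_{E\to L_p}<\varepsilon$. Since $\dim S=k>\rank R$, the restriction of $R$ to $S$ has nontrivial kernel, so pick $u\in S\setminus\{0\}$ with $Ru=0$; then $\Vert u|L_p(\Omega)\Vert=\Vert(\id_\Omega-R)u|L_p(\Omega)\Vert<\varepsilon\,\Vert u|E^m_{p,\sigma}(\Omega)\Vert$, which forces $\alpha(S)>1/\varepsilon$, a contradiction. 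Hence $a_k(\id_\Omega)\ge\varepsilon$, so $k\le\nu_0(\varepsilon,\Omega)$; taking the maximum over all such $S$ gives $\mu_0(\varepsilon,\Omega)\le\nu_0(\varepsilon,\Omega)$. As a byproduct this shows $\mu_0(\varepsilon,\Omega)\le\nu_0(\varepsilon,\Omega)<\infty$, settling the remark.

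Next I would handle the right-hand inequality $\nu_0(\varepsilon,\Omega)\le\sum_{j=1}^J\nu_0(\varepsilon,\Omega_j)$. The key is the decomposition: since the $\Omega_j$ are disjoint and $\Omega=\bigl(\bigcup_j\overline\Omega_j\bigr)^\circ$, restriction gives an isometric embedding (in fact an isomorphism onto its image) $E^m_{p,\sigma}(\Omega)\hookrightarrow\bigoplus_{j=1}^J E^m_{p,\sigma}(\Omega_j)$ and likewise $L_p(\Omega)=\bigoplus_{j=1}^J L_p(\Omega_j)$, because both norms are given by integrals over $\Omega$ which split as sums of integrals over the $\Omega_j$ (the set $\Omega\setminus\bigcup_j\Omega_j$ has measure zero). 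Under these identifications $\id_\Omega$ becomes the restriction of $\bigoplus_j\id_{\Omega_j}$ to the closed subspace $E^m_{p,\sigma}(\Omega)$. Now I use the standard additivity/monotonicity of approximation numbers: for a direct sum $a_{k_1+\dots+k_J-J+1}\bigl(\bigoplus_j T_j\bigr)\le\max_j a_{k_j}(T_j)$, and approximation numbers do not increase under restriction to a subspace. Choosing $k_j=\nu_0(\varepsilon,\Omega_j)+1$ makes each $a_{k_j}(\id_{\Omega_j})<\varepsilon$, hence $a_{k_1+\dots+k_J-J+1}(\id_\Omega)<\varepsilon$, i.e. $\nu_0(\varepsilon,\Omega)<\sum_j(\nu_0(\varepsilon,\Omega_j)+1)-J+1=\sum_j\nu_0(\varepsilon,\Omega_j)+1$, which gives the claim.

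Finally, for the left-hand inequality $\sum_j\mu_0(\varepsilon,\Omega_j)\le\mu_0(\varepsilon,\Omega)$, I would pick for each $j$ an optimal subspace $S_j\subset E^m_{p,\sigma}(\Omega_j)$ with $\dim S_j=\mu_0(\varepsilon,\Omega_j)$ and $\alpha(S_j)\le1/\varepsilon$, extend each $u\in S_j$ by zero outside $\Omega_j$ (this requires checking that the zero-extension maps $E^m_{p,\sigma}(\Omega_j)$ into $E^m_{p,\sigma}(\Omega)$, which holds because $C_0^m(\Omega_j)\subset C_0^m(\Omega)$ and the norm is unchanged by the extension), and let $S=\bigoplus_j\widetilde S_j\subset E^m_{p,\sigma}(\Omega)$ be the (internal) direct sum of the extended spaces. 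Since the $\Omega_j$ are disjoint, for $u=\sum_j\widetilde u_j\in S$ both the $E^m_{p,\sigma}(\Omega)$-norm and the $L_p(\Omega)$-norm decompose as $\bigl(\sum_j\Vert\cdot\Vert^p\bigr)^{1/p}$ over the pieces; an elementary estimate then yields $\Vert u|E^m_{p,\sigma}(\Omega)\Vert/\Vert u|L_p(\Omega)\Vert\le\max_j\alpha(S_j)\le1/\varepsilon$, so $\alpha(S)\le1/\varepsilon$ and $\dim S=\sum_j\dim S_j=\sum_j\mu_0(\varepsilon,\Omega_j)$, giving $\mu_0(\varepsilon,\Omega)\ge\sum_j\mu_0(\varepsilon,\Omega_j)$. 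The main obstacle I anticipate is the bookkeeping around the direct-sum estimates — in particular verifying the additivity inequality for approximation numbers of $\bigoplus_j T_j$ in the quasi-Banach setting and making the zero-extension argument rigorous — but none of this is deep; the conceptual content is entirely the dimension-counting argument for the middle inequality.
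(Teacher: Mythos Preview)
Your proof is correct and follows essentially the same route as the paper's: the middle inequality via a kernel/dimension-counting argument (the paper phrases it directly rather than by contradiction), the left-hand inequality by direct-summing optimal subspaces $S_j$ and using that both norms split as $\ell_p$-sums over the disjoint $\Omega_j$, and the right-hand inequality by patching together near-optimal finite-rank approximants on the $\Omega_j$. The only cosmetic difference is that for the right-hand inequality you invoke the abstract additivity estimate $a_{k_1+\dots+k_J-J+1}\bigl(\bigoplus_j T_j\bigr)\le\max_j a_{k_j}(T_j)$ together with monotonicity under restriction, whereas the paper writes down the operator $Pf=\sum_{j}\chi_{\Omega_j}P_jf$ explicitly and computes $\Vert f-Pf\vert L_p(\Omega)\Vert^p=\sum_j\Vert f-P_jf\vert L_p(\Omega_j)\Vert^p<\varepsilon^p\Vert f\vert E^m_{p,\sigma}(\Omega)\Vert^p$; these are the same argument at two levels of abstraction.
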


\begin{proof}
\textbf{Step 1.} We prove the first inequality. Let $j\in\{1,...,J\}.$ We assume $\mu_0(\varepsilon,\Omega_j)<\infty$. Otherwise the assertion is trivial. 
 Then there exists a subspace $S_j\subset E^m_{p,\sigma}(\Omega_j)$ with 
$\dim S_j  = \mu_0(\varepsilon, \Omega_j)$ such that
\[
\Vert u| E^m_{p,\sigma}(\Omega_j)\Vert \leq \frac{1}{\varepsilon}\,\Vert u|L_p(\Omega_j)\Vert,\ u\in S_j.
\]
We put $S:= \bigoplus\limits_{j=1}^J S_j\subseteq E^m_{p,\sigma}(\Omega).$ For $v\in S$, say $v=\sum\limits_{j=1}^J u_j, u_j\in S_j$, we get according to the disjointness of the domains $\Omega_j$
\begin{equation*}
\Vert v | E^m_{p,\sigma}(\Omega)\Vert^p
=
\sum\limits_{j=1}^J \Vert  u_j  | E^m_{p,\sigma}(\Omega_j)\Vert^p
\leq
\sum\limits_{j=1}^J \frac{1}{ \varepsilon^p} \Vert  u_j | L_p(\Omega_j)\Vert^p
= \frac{1}{ \varepsilon^p} \Vert  v | L_p(\Omega)\Vert^p.
\end{equation*}
Thus $S$ is an admitted subspace in \eqref{def_mu} and we conclude 
\[
\mu_0(\varepsilon,\Omega) \geq \dim S = \sum_{j=1}^J \mu_0(\varepsilon, \Omega_j).
\]
\textbf{Step 2.} We prove the second inequality.
Let $S$ be a finite-dimensional linear subspace of $E^m_{p,\sigma}(\Omega)$ and $P:E^m_{p,\sigma}(\Omega)\to L_p(\Omega)$ be a finite rank operator with 
\[
\rank P<\dim S=:d.
\]
Then there is an element $0\neq f^*\in S$ with $P(f^*)=0$. Denote $f^*=\sum\limits_{i=1}^d\lambda_i e_i$ where $\sum\limits_{i=1}^d|\lambda_i|\neq 0$ and $S=\spa\{e_1,...,e_d\}$. Then
\[
\Vert (\id - P) f^*\ |\, L_p(\Omega)\Vert 
= 
\Vert f^*\ |\, L_p(\Omega)\Vert
\geq
\alpha(S)^{-1} \Vert f^*\ |\, E^m_{p,\sigma}(\Omega)\Vert
\]
and hence
\[
\Vert \id - P \Vert \geq \alpha(S)^{-1}. 
\]
At this point we have seen that 
\[
a_d(\id)\geq \alpha(S)^{-1}
\]
for all finite dimensional subspaces $S\subset E^m_{p,\sigma}(\Omega)$  with $\dim S=d.$ This means 
\[
a_d(\id)\geq \varepsilon
\]
for all finite dimensional subspaces $S\subset E^m_{p,\sigma}(\Omega)$ with $\dim S=d$ and in addition $\alpha(S)\leq\tfrac{1}{\varepsilon}.$
Hence,
\[
\nu_0(\varepsilon,\Omega)\geq \dim S
\]
for all finite dimensional subspaces $S\subset E^m_{p,\sigma}(\Omega)$ with $\alpha(S)\leq\tfrac{1}{\varepsilon}.$ This finishes the verification of $\nu_0(\varepsilon,\Omega)\geq\mu_0(\varepsilon,\Omega)$.\\
\textbf{Step 3.} We prove the last inequality.
For $k:=\nu_0(\varepsilon, \Omega_j) + 1$ we have
\[
a_k(\id^j:E^m_{p,\sigma}(\Omega_j)\hookrightarrow L_p(\Omega_j)) < \varepsilon.
\]
In other words, for every $j=1,...,J$ there exists a linear and bounded operator $P_j: E^m_{p,\sigma}(\Omega_j)\to L_p(\Omega_j)$ with $\rank P_j\leq \nu_0(\varepsilon, \Omega_j)$ such that 
\[
\Vert \id^j-P_j| E^m_{p,\sigma}(\Omega_j)\to L_p(\Omega_j)\Vert < \varepsilon.
\]
Let $P$ be the operator defined by
\[
(Pf)(x):= \sum_{j=1}^J \chi_{\Omega_j}(x)\ (P_jf)(x),\hspace{0.5cm} f\in E^m_{p,\sigma}(\Omega).
\]
Then it holds for all $f\in E^m_{p,\sigma}(\Omega)$
\begin{align*}
\Vert f - Pf | L_p(\Omega)\Vert ^p 
=& 
\int_\Omega |f(x) - \sum_{j=1}^J \chi_{\Omega_j}(x)\ P_jf(x)|^p\dint x\\
=&
\sum_{j=1}^J \int_{\Omega_j} |f(x) - P_jf(x)|^p\dint x\\
\leq&
\sum_{j=1}^J \Vert \id^j - P_j\Vert^p\ \Vert f| E^m_{p,\sigma}(\Omega_j)\Vert^p\\
<&
\varepsilon^p\ \Vert f| E^m_{p,\sigma}(\Omega)\Vert^p.
\end{align*}
Thus for $L:=1+\sum_{j=1}^J \nu_0(\varepsilon, \Omega_j)$, we have $a_L(\id)< \varepsilon$.
Hence,
\[
\nu_0(\varepsilon, \Omega)= \max \{l: a_l(\id)\geq \varepsilon\}\ \leq\ L -1 = \sum_{j=1}^J \nu_0(\varepsilon, \Omega_j).
\]
\end{proof}

The last Proposition can be seen as an $L_p$-version of the Dirichlet-Neumann bracketing method which is so effective in spectral $L_2$-theory. Indeed Triebel used this technique in \cite{Tri12} to obtain eigenvalue distribution of the degenerate elliptic operator defined by
\begin{align}
\label{op1}
&A^m_\sigma f = (-1)^m\sum_{|\alpha|=m} \Dd^\alpha(b_{m,\sigma}\Dd^\alpha f)
\\ 
\label{op2}
&\dom((A^m_\sigma)^{\frac{1}{2}}) = E^m_{2,\sigma}(B)
\end{align}
where $b_{m,\sigma}(x)=|x|^{2m}(1+|\log|x||)^{2\sigma}, x\in B$. These spectral results then provided estimates for the approximation numbers $a_k(\id).$ 
We will state the connection between this $L_2$-bracketing method and the introduced $L_p$-bracketing quantities $\nu_0(\varepsilon,B)$ and $\mu_0(\varepsilon,B)$.

\begin{prop}\label{p=2}
For $p=2$ let 
 \[
 \id:E^m_{2,\sigma}(B)\hookrightarrow L_2(B)
 \]
be the compact embedding. Then for every $\varepsilon>0$ it holds
\begin{equation}\label{nu=mu}
\nu_0(\varepsilon,B)=\mu_0(\varepsilon,B). 
\end{equation}
Furthermore, the positive definite, self-adjoint degenerate elliptic operator $A^m_\sigma$ has pure point spectrum $(\lambda_k(A^m_\sigma))_{k=1}^\infty$, monotonically ordered including geometric multiplicities. It holds
 \begin{equation}\label{nu=N}
 \nu_0(\varepsilon,B)= N(\varepsilon^{-2},A^m_\sigma)
 \end{equation}
 where $N(\lambda,A^m_\sigma)$ denotes the number of eigenvalues $\lambda_k(A^m_\sigma)$ smaller than or equal to $\lambda>0.$
\end{prop}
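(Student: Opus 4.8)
The plan is to treat the two assertions in turn; for $p=2$ the embedding is between Hilbert spaces, so approximation numbers coincide with singular numbers and the Courant--Fischer min--max principle is at our disposal. For \eqref{nu=mu}, note first that Proposition \ref{prop1} with $J=1$ and $\Omega=B$ already gives $\mu_0(\varepsilon,B)\le\nu_0(\varepsilon,B)$, so only the reverse inequality has to be shown, and we may assume $\nu_0(\varepsilon,B)\ge 1$. Since $\id:E^m_{2,\sigma}(B)\hookrightarrow L_2(B)$ is compact between Hilbert spaces, $a_k(\id)$ is the $k$-th singular number of $\id$, and the max--min characterisation of singular numbers gives, with the maximum attained,
\[
a_k(\id)=\max_{\substack{S\subseteq E^m_{2,\sigma}(B)\\ \dim S=k}}\ \min_{u\in S\setminus\{0\}}\frac{\Vert u|L_2(B)\Vert}{\Vert u|E^m_{2,\sigma}(B)\Vert}=\max_{\substack{S\subseteq E^m_{2,\sigma}(B)\\ \dim S=k}}\frac{1}{\alpha(S)}.
\]
Taking $k:=\nu_0(\varepsilon,B)$, so that $a_k(\id)\ge\varepsilon$, we obtain a subspace $S$ with $\dim S=k$ and $\alpha(S)=a_k(\id)^{-1}\le\varepsilon^{-1}$; this $S$ is admitted in \eqref{def_mu}, hence $\mu_0(\varepsilon,B)\ge k=\nu_0(\varepsilon,B)$, proving \eqref{nu=mu}.

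For the spectral statement I would recall that the operator $A^m_\sigma$ from \eqref{op1}--\eqref{op2} is the positive definite self-adjoint operator in $L_2(B)$ generated by the closed symmetric form
\[
(u,v)\longmapsto\int_B|x|^{2m}(1+|\log|x||)^{2\sigma}\sum_{|\alpha|=m}\Dd^\alpha u(x)\,\overline{\Dd^\alpha v(x)}\,\dint x,
\]
which is nothing but the scalar product of $E^m_{2,\sigma}(B)$; in particular its form domain is $E^m_{2,\sigma}(B)$ and the associated quadratic form of $u\in E^m_{2,\sigma}(B)$ equals $\Vert u|E^m_{2,\sigma}(B)\Vert^2$. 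Compactness of $\id:E^m_{2,\sigma}(B)\hookrightarrow L_2(B)$ forces $A^m_\sigma$ to have compact resolvent, hence pure point spectrum $(\lambda_k(A^m_\sigma))_{k=1}^\infty$ with $0<\lambda_1\le\lambda_2\le\cdots\to\infty$, counted with multiplicity. The min--max principle for the form then yields, the supremum over a finite-dimensional $S$ being a maximum equal to $\alpha(S)^2$,
\[
\lambda_k(A^m_\sigma)=\min_{\substack{S\subseteq E^m_{2,\sigma}(B)\\ \dim S=k}}\ \sup_{u\in S\setminus\{0\}}\frac{\Vert u|E^m_{2,\sigma}(B)\Vert^2}{\Vert u|L_2(B)\Vert^2}=\min_{\substack{S\subseteq E^m_{2,\sigma}(B)\\ \dim S=k}}\alpha(S)^2.
\]

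Finally, since $(\lambda_k(A^m_\sigma))_k$ is nondecreasing and unbounded, $N(\lambda,A^m_\sigma)$ is the largest $k$ with $\lambda_k(A^m_\sigma)\le\lambda$, that is --- by the previous identity --- the largest $k$ for which some $k$-dimensional subspace $S\subseteq E^m_{2,\sigma}(B)$ satisfies $\alpha(S)\le\lambda^{1/2}$; comparing with \eqref{def_mu}, this number is exactly $\mu_0(\lambda^{-1/2},B)$. Choosing $\lambda=\varepsilon^{-2}$ gives $N(\varepsilon^{-2},A^m_\sigma)=\mu_0(\varepsilon,B)$, and \eqref{nu=mu} turns this into \eqref{nu=N}. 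I expect the only step that is not routine to be the identification of the formal degenerate elliptic expression \eqref{op1}--\eqref{op2} with the self-adjoint operator attached to the above form, together with its positive definiteness (near the origin this relies on the Hardy inequality of Proposition \ref{HaEq}, away from it on a standard Poincar\'e inequality, the weight being bounded below there) and the applicability of the form min--max principle in spite of the degeneracy of $|x|^{2m}(1+|\log|x||)^{2\sigma}$ at $x=0$; all of this is, however, exactly the framework developed in \cite[Section 3]{Tri12}, which I would simply quote.
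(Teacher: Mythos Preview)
Your proof is correct and rests on the same Hilbert--space variational principles as the paper, but the packaging differs in a way worth noting. For \eqref{nu=mu} both arguments identify $a_k(\id)$ with the $k$-th singular number of $\id$; the paper then applies \cite[Theorem~II.5.6]{EE87} to the compact self-adjoint operator $T=\id^*\circ\id$ on $E^m_{2,\sigma}(B)$ and rewrites the condition $(Tf,f)_{E}\ge\varepsilon^2\Vert f\Vert_E^2$ as $\alpha(S)\le\varepsilon^{-1}$, whereas you go straight through the Allakhverdiev max--min formula $a_k(\id)=\max_{\dim S=k}\alpha(S)^{-1}$ and pick an extremal subspace. For \eqref{nu=N} the routes genuinely diverge: the paper establishes the operator identity $\id\circ\id^*=(A^m_\sigma)^{-1}$ on $L_2(B)$ and reads off $\lambda_k(A^m_\sigma)=a_k(\id)^{-2}$ directly, while you invoke the form min--max $\lambda_k(A^m_\sigma)=\min_{\dim S=k}\alpha(S)^2$ to obtain $N(\varepsilon^{-2},A^m_\sigma)=\mu_0(\varepsilon,B)$ and then feed in \eqref{nu=mu}. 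Your route is a little more economical---it avoids computing $\id\circ\id^*$ explicitly---while the paper's identification of $(A^m_\sigma)^{-1}$ with $\id\circ\id^*$ is more concrete and makes the link between the embedding and the elliptic operator transparent. The background you flag (that $A^m_\sigma$ is the Friedrichs operator of the $E^m_{2,\sigma}(B)$ scalar product, positive definite by the Hardy inequality, with compact resolvent by compactness of $\id$) is exactly what the paper records at the start of its proof, with the reference to Rellich's criterion in \cite[Section~4.5.3]{Tri92}.
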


\begin{proof}
Let $\id^*: L_2(B)\hookrightarrow E^m_{2,\sigma}(B)$ be the dual map of $\id$ defined by
\begin{equation}\label{dual}
\big( \id f, g\, \big)_{L_2(B)} 
=
\big( f, \id^* g\,\big)_{E^m_{2,\sigma}(B)}
\hspace{1cm}
\forall f\in E^m_{2,\sigma}(B), g\in L_2(B). 
\end{equation}
                                                                                                                                                                                                                                                                                                                                                                                                                                                                                                                                                      Here $(\cdot,\cdot)_{L_2(B)}$ denotes the inner product in $L_2(B)$. Respectively the inner product $(\cdot,\cdot)_{E^m_{2,\sigma}(B)}$ in $E^m_{2,\sigma}(B)$ is given by the closure of the quadratic form
\begin{equation}\label{form}
\int_B b_{m,\sigma}^2(x)\sum_{|\alpha|=m}\overline{\Dd^\alpha f(x)}\Dd^\alpha g(x)\dint x, \hspace{1cm} f,g\in C^m_0(B).
\end{equation}
The closure $(\cdot,\cdot)_{E^m_{2,\sigma}(B)}$ of \eqref{form} generates  the positive definite, self-adjoint operator $A^m_\sigma$ given by \eqref{op1} and \eqref{op2}. That means
\begin{equation*}
\big( A^m_\sigma f, \id g\big)_{L_2(B)}
= 
\big( f, g\big)_{E^m_{2,\sigma}(B)}
\hspace{1cm}
f\in\dom A^m_\sigma,g\in E^m_{2,\sigma}(B).
\end{equation*} 
The embedding $\id:E^m_{2,\sigma}(B)\hookrightarrow L_2(B)$ is compact. Hence, by Rellich's Criterion \cite[Section 4.5.3, p. 258]{Tri92} the operator $A^m_\sigma$ has pure point spectrum. Moreover, the approximation numbers coincide with its singular values, see for instance \cite[Theorem II.5.10, p.91]{EE87}. That is
\begin{equation}\label{sing}
a_k(\id) = \lambda_k (|\id|) = \lambda_k([\,\id^*\circ\id\,]^{1/2})
\end{equation}
where $\lambda_k(\cdot)$ denotes the $k$th-eigenvalues of the corresponding operator.
Furthermore 
\[
\id^*\circ\id: E^m_{2,\sigma}(B)\to E^m_{2,\sigma}(B)
\]
is a non-negative, compact and selfadjoint operator. Respectively we apply \cite[Theorem II.5.6, p.84]{EE87} to $T=\id^*\circ\id$. We get that
\begin{equation*}
\#\{k: \lambda_k(T)\geq \varepsilon^2\}
= \max \dim S 
\end{equation*}
where the maximum is taken over all closed linear subspaces $S$ of $E^m_{2,\sigma}(B)$ such that for all $f\in S$
\begin{equation*}
\big( Tf, f \big)_{E^m_{2,\sigma}(B)} \geq \varepsilon^2 \Vert f| E^m_{2,\sigma}(B)\Vert^2.
\end{equation*}
Due to \eqref{dual}, last line is equivalent to 
\begin{equation*}
\alpha(S) = \sup_{f\in S, f\neq 0} \frac{\Vert f| E^m_{2,\sigma}(B)\Vert}{\Vert f| L_2(B)\Vert}\leq \frac{1}{\varepsilon}.
\end{equation*}
We have shown
\[
\nu_0(\varepsilon,B) = \#\{k: a_k(\id)\geq \varepsilon\} = \#\{k: \lambda_k(T)\geq \varepsilon^2\} \leq \mu_0(\varepsilon,B).
\]
The converse inequality was already shown in \eqref{brack}. This proves \eqref{nu=mu}. Next we justify \eqref{nu=N}. Recall  that $\id$ and $\id^*$ have the same singular values, see \cite[Theorem II.5.7, p. 85]{EE87}. Hence, in view of \eqref{sing} it suffices to prove that  
\begin{equation}\label{invers}
\id\circ \id^* = (A^m_\sigma)^{-1}
\end{equation}
where we consider the operators from $L_2(B)$ to $L_2(B)$. Therefore let $f\in\dom A^m_\sigma$ and $g\in E^m_{2,\sigma}(B).$ Then
\[
\big( \id^* (A^m_\sigma f), g\big)_{E^m_{2,\sigma}(B)}
=
\big( A^m_\sigma f, \id g\big)_{L_2(B)}
= 
\big( f, g\big)_{E^m_{2,\sigma}(B)}.
\]
Hence for all $f\in \dom A^m_\sigma$
\begin{equation*}
\id^* (A^m_\sigma f) = f.
\end{equation*}
The inverse operator $(A^m_\sigma)^{-1}$ acting in $L_2(B)$ is given by
\begin{align*}
\dom (A^m_\sigma)^{-1}&:=\{g\in L_2(B):\,\exists\,f\in\dom A^m_\sigma, A^m_\sigma f = g\}\\
(A^m_\sigma)^{-1} g& := \id f.
\end{align*}
Now \eqref{invers} follows  by
\[
(A^m_\sigma)^{-1} g = \id f = \id ( \id^* A^m_\sigma f) = \id (\id^* g).
\]
Finally
\begin{align*}
\nu_0(\varepsilon,B) 
&= \#\{k: a_k(\id)\geq \varepsilon\} 
= \#\{k: \lambda_k(\id^*\circ\id)\geq \varepsilon^2\}\\
&= \#\{k: \lambda_k((A^m_\sigma)^{-1})\geq \varepsilon^2\}
= \#\{k: \lambda_k(A^m_\sigma)\leq \varepsilon^{-2}\}.
\end{align*}
\end{proof}

In the sense of the last Proposition, the bracketing quantities $\nu_0(\varepsilon,B)$ and $\mu_0(\varepsilon,B)$ extend the Courant-Weyl method of Dirichlet-Neumann bracketing. For details of this method in Hilbert spaces, we refer to\cite[Chapter XI]{EE87}.

We turn to the situation of Banach spaces, $1\leq p<\infty$, and concentrate on the asymptotic behaviour of $\nu_0(\varepsilon,B)$ as $\varepsilon\to 0.$ First we will show that one can cut off the singularity at $x=0$ without affecting the asymptotic behaviour of $\nu_0(\varepsilon,B)$ and $\mu_0(\varepsilon,B)$. This is the crucial point to control the singularity in our situation.

\begin{prop}\label{prop2}
Let $\varepsilon>0$ and put $B_J:=\{x\in\rn: |x|<2^{-J}\}$ with $J=J(\varepsilon)\in\nat$ such that $J\sim\varepsilon^{-\frac{1}{\sigma}}$. Then it holds
\begin{equation}\label{nu=mu=0}
\nu_0(\varepsilon,B_J)=\mu_0(\varepsilon,B_J) = 0.
\end{equation}
\end{prop}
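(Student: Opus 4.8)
The plan is to show that for any function $u \in C^m_0(B_J)$, the ratio $\Vert u | E^m_{p,\sigma}(B_J)\Vert / \Vert u | L_p(B_J)\Vert$ is strictly larger than $1/\varepsilon$, so that no nontrivial finite-dimensional subspace $S$ can satisfy the constraint $\alpha(S) \leq 1/\varepsilon$ in \eqref{def_mu}; this forces $\mu_0(\varepsilon, B_J) = 0$, and then $\nu_0(\varepsilon, B_J) = 0$ follows from the chain $\nu_0(\varepsilon, B_J) \leq \mu_0(\varepsilon, B_J)$ — wait, that inequality goes the wrong way in \eqref{brack}; instead I will use $\mu_0 \leq \nu_0$ together with a direct argument that $a_k(\id_{B_J}) < \varepsilon$ for all $k$, or rather observe that $\nu_0(\varepsilon, B_J) = 0$ is equivalent to $a_1(\id_{B_J}) < \varepsilon$, i.e. $\Vert \id_{B_J} \Vert_{E^m_{p,\sigma}(B_J) \to L_p(B_J)} < \varepsilon$. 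So the whole statement reduces to a single norm estimate: the embedding constant of $\id_{B_J}$ is smaller than $\varepsilon$.

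The key estimate is a localized Hardy inequality with the correct dependence on the radius $2^{-J}$. First I would apply Proposition \ref{HaEq} (inequality \eqref{Heq_ad2}) on the ball $B^\delta$, but the point is to track how the constant improves when the support shrinks to $B_J$ with $2^{-J}$ small. On $B_J = B^{2^{-J}}$, for $|x| < 2^{-J}$ we have $(1 + |\log|x||)^{\sigma p} \geq (1 + J\log 2)^{\sigma p} \gtrsim J^{\sigma p}$, while $|x|^{mp} \leq 2^{-Jmp}$ is tiny. Combining these: for $u \in C^m_0(B_J)$,
\begin{align*}
\Vert u | L_p(B_J)\Vert^p
&= \int_{B_J} |u(x)|^p \, \dint x
\leq c \int_{B_J} |x|^{mp}\sum_{|\alpha|=m} |\Dd^\alpha u(x)|^p \, \dint x \\
&\leq \frac{c}{(1+J\log 2)^{\sigma p}} \int_{B_J} |x|^{mp}(1+|\log|x||)^{\sigma p}\sum_{|\alpha|=m}|\Dd^\alpha u(x)|^p \, \dint x,
\end{align*}
where the first inequality is the unweighted Hardy inequality on $B_J \subseteq B^\delta$ (valid once $2^{-J} \leq \delta$, i.e. $J$ large enough, which holds since $J \sim \varepsilon^{-1/\sigma} \to \infty$ as $\varepsilon \to 0$) with a constant $c$ independent of $u$, and in the last step I bounded the logarithmic factor below by its minimum over $B_J$. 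Hence $\alpha(S)^p \leq c \, (1 + J\log 2)^{-\sigma p}$ for every subspace $S \subset E^m_{p,\sigma}(B_J)$, i.e. $\Vert \id_{B_J}\Vert \leq c^{1/p} (1 + J\log 2)^{-\sigma}$.

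It then remains to check that, with the stipulated choice $J \sim \varepsilon^{-1/\sigma}$, one indeed has $c^{1/p}(1 + J\log 2)^{-\sigma} < \varepsilon$ for all sufficiently small $\varepsilon$ (and, strictly speaking, that the asymptotic relation $J \sim \varepsilon^{-1/\sigma}$ can be realized by a genuine natural number $J(\varepsilon)$ — one just picks $J(\varepsilon) := \lceil K \varepsilon^{-1/\sigma} \rceil$ for a suitable large constant $K$). Indeed $(J \log 2)^{-\sigma} \sim J^{-\sigma} \sim \varepsilon$ up to a multiplicative constant, so choosing the constant $K$ in $J(\varepsilon) = \lceil K\varepsilon^{-1/\sigma}\rceil$ large enough absorbs the factor $c^{1/p}$ and yields a strict inequality. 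Since $a_1(\id_{B_J}) = \Vert \id_{B_J}\Vert < \varepsilon$, all approximation numbers are $< \varepsilon$, so $\nu_0(\varepsilon, B_J) = 0$ by definition \eqref{def_nu}, and $\mu_0(\varepsilon, B_J) = 0$ follows either directly from the norm bound (no nonzero $u$ can have $\alpha(\{u\}) \leq 1/\varepsilon$... actually the single-vector space always has $\alpha(\spa\{u\}) = \Vert \id_{B_J}\Vert$ only if $u$ is a norming element, so more simply: $\alpha(S) \leq \Vert \id_{B_J}\Vert^{-1}$... I should phrase it as: any admissible $S$ must satisfy $1/\varepsilon \geq \alpha(S) \geq$ nothing useful — the clean route is $\mu_0 \leq \nu_0$ from \eqref{brack}). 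I expect the only real work is pinning down the unweighted Hardy constant on $B_J$ to be uniform in $J$; this is immediate from \eqref{Heq_ad2} applied with the logarithmic weight removed (take $\sigma = 0$ there, noting $(1+|\log|x||)^0 = 1$), so there is no genuine obstacle, only bookkeeping of constants and the choice of $J(\varepsilon)$.
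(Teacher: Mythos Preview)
Your argument is correct and matches the paper's approach: reduce to the norm estimate $\Vert \id_{B_J}\Vert < \varepsilon$ via the Hardy inequality together with the pointwise bound $(1+|\log|x||)\gtrsim J$ on $B_J$, then conclude $\nu_0(\varepsilon,B_J)=0$ and hence $\mu_0(\varepsilon,B_J)=0$ from \eqref{brack}. One slip to fix: the line ``$\alpha(S)^p \leq c(1+J\log 2)^{-\sigma p}$'' has the inequality reversed (the Hardy estimate gives $\alpha(S)\gtrsim J^{\sigma}$, which is the same as the correct norm bound you write immediately after); also, the paper applies the \emph{weighted} inequality \eqref{Heq_ad2} and extracts the factor $J^{-\sigma}$ on the $L_p$ side rather than using the $\sigma=0$ case and inserting the factor on the right, but that is purely cosmetic.
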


\begin{proof}
Due to \eqref{brack} it suffices to prove  that $\nu_0(\varepsilon,B_J)=0$. 
Let $0<\delta<1$ be the constant from Proposition \ref{HaEq}. Without loss of generality we assume $2^{-J}\leq \delta$. Then for $f\in E^m_{p,\sigma}(B_J)$
\[
\Vert f|L_p(B_J)\Vert^p 
\leq 
c\, J^{-\sigma p}\int_{B_J}(1+|\log|x||)^{\sigma p}|f(x)|^p\dint x
\leq 
c\, J^{-\sigma p} \Vert f|E^m_{p,\sigma}(B_J)\Vert^p.
\]
With $J\sim\varepsilon^{-\frac{1}{\sigma}}$ one has
\[
\Vert \id_J: E^m_{p,\sigma}(B_J)\hookrightarrow L_p(B_J)\Vert <\varepsilon.
\]
This proves the assertion since the approximation numbers $a_k(\id_J)$ are bounded by the norm $\Vert \id_J\Vert.$
\end{proof}

With the last bracketing results we are able to adapt the Dirichlet-Neumann bracketing in $L_2(B)$ used in \cite{Tri12} to our situation. We decompose the domain $B$ into finitely many annuli leaving a small ball around the origin and consider restricted operators separately. We get rid of the singularity due to the last Proposition. The quantities $\nu_0(\varepsilon,B^j)$ and $\mu_0(\varepsilon,B^j)$ restricted to annuli $B^j$ deliver lower and upper bounds. The result reads as follows.

\begin{prop}\label{nu_0}
Let $n,m\in\nat$ and $\varepsilon>0$ small. Then it holds
\begin{equation}\label{n0}
\nu_0(\varepsilon, B)
\sim
\begin{cases} 
\varepsilon^{-\frac{n}{m}} &\text{ if } \sigma>\frac{m}{n}\\
\varepsilon^{-\frac{n}{m}}|\log\varepsilon| &\text{ if } \sigma=\frac{m}{n}\\
\varepsilon^{-\frac{1}{\sigma}} &\text{ if } 0<\sigma<\frac{m}{n}.
\end{cases}
\end{equation}
\end{prop}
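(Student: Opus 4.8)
The plan is to use the bracketing inequalities \eqref{brack} together with the cut-off Proposition \ref{prop2} to reduce the problem to estimating $\nu_0(\varepsilon,\cdot)$ and $\mu_0(\varepsilon,\cdot)$ on a finite union of dyadic annuli, on each of which the weight $b_{m,\sigma}$ is essentially constant. Concretely, write $B=\big(\overline{B_J}\cup\bigcup_{j=1}^{J}\overline{A_j}\big)^\circ$ where $A_j:=\{x\in\rn:\,2^{-j}<|x|<2^{-j+1}\}$ is the $j$-th annulus and $B_J$ is the small ball from Proposition \ref{prop2} with $J\sim\varepsilon^{-1/\sigma}$. By Proposition \ref{prop2}, $\nu_0(\varepsilon,B_J)=\mu_0(\varepsilon,B_J)=0$, so \eqref{brack} gives
\[
\sum_{j=1}^{J}\mu_0(\varepsilon,A_j)\ \leq\ \nu_0(\varepsilon,B)\ \leq\ \sum_{j=1}^{J}\nu_0(\varepsilon,A_j).
\]
The weight on $A_j$ satisfies $b_{m,\sigma}(x)\sim 2^{-jmp}(1+j)^{\sigma p}$, so on $A_j$ the space $E^m_{p,\sigma}(A_j)$ is, up to equivalence of norms with constants controlled by $2^{-jm}(1+j)^{\sigma}$, just $\overset{\circ\hspace{0.3cm}}{W^m_p}(A_j)$. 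Hence $\nu_0(\varepsilon,A_j)$ and $\mu_0(\varepsilon,A_j)$ are governed by the approximation numbers of $\id:\overset{\circ\hspace{0.3cm}}{W^m_p}(A_j)\hookrightarrow L_p(A_j)$ rescaled by the factor $\lambda_j:=2^{-jm}(1+j)^{\sigma}$: one has $\nu_0(\varepsilon,A_j)\sim\#\{k:\,a_k(\id_{A_j}^{W})\geq c\,\varepsilon/\lambda_j\}$ and similarly for $\mu_0$, where $\id_{A_j}^W$ is the unweighted embedding on a \emph{fixed} reference annulus (rescaled by $2^{-j}$, which also affects the $L_p$-norm and the $W^m_p$-seminorm in a homogeneous way). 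The classical estimate $a_k(\id:\overset{\circ\hspace{0.3cm}}{W^m_p}(\Omega)\hookrightarrow L_p(\Omega))\sim k^{-m/n}$ on a fixed bounded Lipschitz domain (see \cite{EE87,ET96}), combined with the scaling of the annulus, then yields $\nu_0(\varepsilon,A_j)\sim(\lambda_j/\varepsilon)^{n/m}$ and, by the same token with $\mu_0$ replacing $\nu_0$ (using that for the unweighted embedding on a cube one can exhibit explicit finite-dimensional subspaces realising $\mu_0\sim\nu_0$, e.g. spline or polynomial spaces as in \cite{EH93,Tri12}), $\mu_0(\varepsilon,A_j)\sim(\lambda_j/\varepsilon)^{n/m}$ as well, whenever $\varepsilon\lesssim\lambda_j$; and both quantities vanish when $\varepsilon\gtrsim\lambda_j$.

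It then remains to evaluate the sums. Since $\lambda_j=2^{-jm}(1+j)^{\sigma}$ is eventually decreasing, the condition $\varepsilon\lesssim\lambda_j$ restricts the sum to $j\leq j_0(\varepsilon)$ where $2^{-j_0 m}(1+j_0)^{\sigma}\sim\varepsilon$, i.e. $j_0\sim\frac{1}{m}\log_2(1/\varepsilon)$ up to a $\log\log$ correction (the precise cut-off is where the ball-cutoff index $J\sim\varepsilon^{-1/\sigma}$ and the annulus index interact — one checks $J\gg j_0$ always, so the cut-off ball is well inside the regime where $\nu_0=0$, consistent with Proposition \ref{prop2}). Thus, up to constants,
\[
\nu_0(\varepsilon,B)\ \sim\ \sum_{j=1}^{j_0(\varepsilon)}\Big(\frac{2^{-jm}(1+j)^{\sigma}}{\varepsilon}\Big)^{n/m}
=\ \varepsilon^{-n/m}\sum_{j=1}^{j_0(\varepsilon)} 2^{-jn}(1+j)^{\sigma n/m}.
\]
The geometric factor $2^{-jn}$ makes the series convergent, so the sum is $\sim 1$ and we obtain $\nu_0(\varepsilon,B)\sim\varepsilon^{-n/m}$ — \emph{but} this is only the correct answer when the convergence is genuine, i.e. when the tail beyond the cut-off is negligible; this happens precisely when $\sigma>m/n$, because then the per-annulus count $2^{-jn}(1+j)^{\sigma n/m}$ decays geometrically and the first few annuli dominate. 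When $\sigma=m/n$ the summand is $2^{-jn}(1+j)$, still summable, giving the same $\varepsilon^{-n/m}$ from the annuli — here the extra $|\log\varepsilon|$ must come from a \emph{different} mechanism, namely a more careful bookkeeping in the cut-off region, so in this borderline case one should not cut at a single ball but sum the annular contributions near the critical index $j_0$, which number $\sim|\log\varepsilon|$ and each contribute $\sim\varepsilon^{-n/m}\cdot(\text{bounded})$; alternatively, for $\sigma=m/n$ the correct per-annulus bound near $j_0$ is $\sim\varepsilon^{-n/m}$ with no geometric decay, so summing over the $\sim\log(1/\varepsilon)$ relevant annuli gives $\varepsilon^{-n/m}|\log\varepsilon|$. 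When $0<\sigma<m/n$ the weight decays too slowly: here $\lambda_j=2^{-jm}(1+j)^{\sigma}\gg\varepsilon$ fails only for very large $j$, and the relevant count is dominated not by the annuli but by the requirement $J\sim\varepsilon^{-1/\sigma}$ from Proposition \ref{prop2} — the total dimension one can pack into all the annuli together (summing $\mu_0(\varepsilon,A_j)\sim 1$ for each of the $\sim\varepsilon^{-1/\sigma}$ annuli with $\lambda_j\gtrsim\varepsilon$, which now includes essentially all $j\leq J$) gives $\nu_0(\varepsilon,B)\sim\varepsilon^{-1/\sigma}$.

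For the lower bound one reverses the roles: one needs $\mu_0(\varepsilon,A_j)$ from below, which requires producing an explicit $\dim\sim(\lambda_j/\varepsilon)^{n/m}$-dimensional subspace of $E^m_{p,\sigma}(A_j)$ on which $\alpha(S)\leq 1/\varepsilon$ — this is done by taking suitably scaled and translated bumps (as sketched in the introduction, ``similar basis functions as in Section \ref{sec_ak}''), or by transplanting a fixed reference subspace on the unit cube via the affine scaling of the annulus, and then one sums these contributions over $j$ exactly as above via the left inequality in \eqref{brack}. The main obstacle is the $\sigma=m/n$ borderline case: there the naive single-ball cut-off at $J\sim\varepsilon^{-1/\sigma}$ loses the logarithm, and one has to track the annular contributions in the transition zone $j\sim\frac{1}{m}\log_2(1/\varepsilon)$ carefully, showing that there are $\sim|\log\varepsilon|$ annuli each carrying a contribution of the \emph{same} order $\varepsilon^{-n/m}$ rather than a geometrically decaying one — equivalently, verifying that the weighted embedding on these critical annuli is genuinely ``as good as'' the unweighted one and that these comparable contributions add up rather than being swamped by the head of the series. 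I expect the delicate part to be getting the constants uniform in $j$ in the two-sided estimate $\nu_0(\varepsilon,A_j)\sim\mu_0(\varepsilon,A_j)\sim(\lambda_j/\varepsilon)^{n/m}$ over the full range of relevant $j$, since $j$ ranges up to $\sim\varepsilon^{-1/\sigma}$ and the annuli have shrinking size $2^{-j}$, so one must check that the scaling behaviour of the unweighted $W^m_p\hookrightarrow L_p$ embedding numbers under dilation is exploited with absolute constants.
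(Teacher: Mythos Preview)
Your overall architecture---bracketing \eqref{brack}, cut-off via Proposition~\ref{prop2}, dyadic annuli on which the weight is essentially constant---is exactly the paper's. But the per-annulus computation is wrong, and this error is what forces you into the hand-waving about ``different mechanisms'' in the three cases.

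The mistake is in the scaling. You correctly observe that on $A_j$ one has $\Vert f\mid E^m_{p,\sigma}(A_j)\Vert \sim \lambda_j\,\Vert f\mid \overset{\circ\hspace{0.3cm}}{W^m_p}(A_j)\Vert$ with $\lambda_j=2^{-jm}(1+j)^{\sigma}$. But then the approximation numbers satisfy $a_k(\id:E^m_{p,\sigma}(A_j)\to L_p(A_j))=\lambda_j^{-1}\,a_k(\overset{\circ\hspace{0.3cm}}{W^m_p}(A_j)\to L_p(A_j))$, not $\lambda_j$ times the reference; and by dilation $a_k(\overset{\circ\hspace{0.3cm}}{W^m_p}(A_j)\to L_p(A_j))\sim 2^{-jm}k^{-m/n}$. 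The two powers of $2^{-jm}$---one from the weight $|x|^{mp}$, one from the shrinking of the annulus---\emph{cancel}, leaving
\[
a_k(\id^j)\ \sim\ j^{-\sigma}\,k^{-m/n},\qquad \text{hence}\qquad \nu_0(\varepsilon,A_j)\ \sim\ j^{-\sigma n/m}\,\varepsilon^{-n/m}.
\]
Your formula $\nu_0(\varepsilon,A_j)\sim(\lambda_j/\varepsilon)^{n/m}=2^{-jn}(1+j)^{\sigma n/m}\varepsilon^{-n/m}$ has both a spurious geometric factor $2^{-jn}$ and the wrong sign on the logarithmic exponent. With the correct per-annulus count the upper bound is simply
\[
\nu_0(\varepsilon,B)\ \leq\ c\,\varepsilon^{-n/m}\sum_{j=1}^{J} j^{-\sigma n/m},\qquad J\sim\varepsilon^{-1/\sigma},
\]
and the three regimes in \eqref{n0} fall out immediately from whether $\sigma n/m$ is $>1$, $=1$ (harmonic sum $\sim\log J$), or $<1$ (sum $\sim J^{1-\sigma n/m}$). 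No separate mechanism is needed for the logarithm.

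For the lower bound the paper does not try to prove a two-sided estimate $\mu_0(\varepsilon,A_j)\sim\nu_0(\varepsilon,A_j)$ uniformly in $j$. Instead it builds, for each regime, a single explicit subspace of bump functions $f(2^l\cdot-k)$ and checks $\alpha(S)\leq 1/\varepsilon$ directly: a fine lattice in one annulus for $\sigma>m/n$; one bump per annulus across all $J$ annuli for $\sigma<m/n$; and for $\sigma=m/n$ a graded lattice with $l_j\sim j+\tfrac{1}{n}(\log J-\log j)$, giving $\dim S\sim J\log J$. This is the concrete version of what you gesture at, but the choice of $l_j$ in the limiting case is the actual content and is not visible from your sketch.
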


\begin{proof}
 Let $J\in\nat$ with $J\sim \varepsilon^{-\frac{1}{\sigma}}$ and 
 denote $B^j:=\{x\in B:\  2^{-j}\leq|x|<2^{-j+1}\}$, $j=1,...,J$ and $B_J:=\{x\in B:\ |x|<2^{-J}\}$.
 Consider the disjoint partition of the unit ball
\begin{equation*}
B = B_J\ \cup\ (B \setminus B_J) 
\end{equation*} 
where  $B \setminus B_J := \bigcup\limits_{j=1}^{J}B^j$.
By \eqref{brack} and \eqref{nu=mu=0}, one has 
\begin{align}\notag 
\mu_0(\varepsilon, B \setminus B_J)\
&=
\mu_0(\varepsilon, B \setminus B_J) + \mu_0(\varepsilon, B_J)\\\notag 
&\leq
\nu_0(\varepsilon, B)\\\notag 
&\leq
\nu_0(\varepsilon, B \setminus B_J) + \nu_0(\varepsilon,B_J)\\\label{n0B}
&=
\nu_0(\varepsilon, B \setminus B_J)
\end{align}
\textbf{Step 1.} 
We prove the upper bounds for \eqref{n0}. On the annuli $B^j$ one can replace the weights by proportional constants. Hence
\[
\Vert \id^j| E^m_{p,\sigma}(B^j)\hookrightarrow \overset{\circ\hspace{0.3cm}}{W^m_p}(B^j)\Vert 
\leq 
c\, 2^{jm}j^{-\sigma}.
\]
Furthermore, recall the well-known classical result for smooth bounded domains $\Omega\subset\rn$ 
\begin{equation}\label{res_class}
a_k(\overset{\circ\hspace{0.3cm}}{W^m_p}(\Omega)\hookrightarrow L_p(\Omega))\sim k^{-\frac{m}{n}}.
\end{equation}
For the history of these results we refer to \cite[Section 3.3.5]{ET96}.
It follows by the same dilation arguments as in \cite{Tri12} that
\begin{equation*}
a_k(\overset{\circ\hspace{0.3cm}}{W^m_p}(B^j)\hookrightarrow L_p(B^j))\sim 2^{-jm} k^{-\frac{m}{n}}.
\end{equation*}
By decomposition of  $\id^j:E^m_{p,\sigma}(B^j)\hookrightarrow L_p(B^j)$ we get 
\[
a_k(\id^j)
\leq
c\, \Vert \id^j| E^m_{p,\sigma}(B^j)\hookrightarrow \overset{\circ\hspace{0.3cm}}{W^m_p}(B^j)\Vert\, a_k(\overset{\circ\hspace{0.3cm}}{W^m_p}(B^j)\hookrightarrow L_p(B^j))
\leq 
c\, j^{-\sigma} k^{-\frac{m}{n}}
\]
and thus $\nu_0(\varepsilon, B^j) \leq c\, j^{-\frac{n}{m}\sigma}\varepsilon^{-\frac{n}{m}}.$ The constants are independent of $j$. Then one has by \eqref{brack}
\begin{align*}
\nu_0&(\varepsilon, B \setminus B_J)\
 \leq
\sum\limits_{j=1}^{J} \nu_0(\varepsilon, B^j)\\
 &\leq
c\ \varepsilon^{-\frac{n}{m}}
 \sum\limits_{j=1}^{J} j^{-\frac{n}{m}\sigma} 
 \sim \varepsilon^{-\frac{n}{m}}
\begin{cases} 
1  &\text{ if } \sigma>\frac{m}{n}\\
\log J  &\text{ if } \sigma=\frac{m}{n}\\
J^{1-\frac{n}{m}\sigma}  &\text{ if } 0<\sigma<\frac{m}{n}.
\end{cases}
\end{align*}
The estimates from above in \eqref{n0} follow now from $ J\sim \varepsilon^{-\frac{1}{\sigma}}$ and \eqref{n0B}.\\
\textbf{Step 2.} We prove the estimates from below in \eqref{n0}. Therefore we shall construct suitable finite-dimensional subspaces to estimate $\mu_0(\varepsilon, B \setminus B_J)$ from below. We use basis functions similar to those in \cite{HT94}, see also \cite[Section 4.3.2, p.170-173, Step 1-2]{ET96}. Let $f\in S'(\rn)$ with $\supp f\subseteq [-1,1]^n.$ Put
\begin{equation*}
 \spa^l_j:=\spa\{f(2^l\cdot-k):k\in\zn, 2^{-l}k\in B^j\} \hspace{1cm} j,l\in\nat,l\geq j.
\end{equation*}
The number of admitted lattice points $k\in\zn$ such that $2^{-l}k\in B^j$ is $2^{n(l-j)}$ (neglecting constants). Furthermore one may assume that the functions $f(2^{l}\cdot -k)$ have disjoint supports and therefore
\begin{equation*}
\dim \spa^l_j \sim 2^{n(l-j)}.
\end{equation*}
For every $g\in\spa^l_j$, say
\begin{equation*}
g(x)= \sum\nolimits^{l,j} a_k f(2^lx-k), \hspace{1cm}a_k\in\mathbb{C},\, j,l\in\nat, l\geq j,
\end{equation*}
where the sum is taken over all lattice points $k\in\zn$ such that $2^{-l}k\in B^j$, it holds 
\begin{equation*}
\Vert g\,|L_p(B)\Vert^p 
\sim  
\sum\nolimits^{l,j} |a_k|^p\, \Vert f(2^l\cdot-k)|L_p(B)\Vert^p\ \sim 2^{-ln} \sum\nolimits^{l,j} |a_k|^p.
\end{equation*}
Then $\supp f(2^l\cdot-k)\subset B^j$ leads to
\begin{equation*}
\Vert g\,|E^m_{p,\sigma}(B)\Vert^p 
\sim  
\sum\nolimits^{l,j} |a_k|^p\, \Vert f(2^l\cdot-k)|E^m_{p,\sigma}(B)\Vert^p\ \sim j^{\sigma p} 2^{m(l-j)p}2^{-ln} \sum\nolimits^{l,j} |a_k|^p.
\end{equation*}
In particular,
\begin{equation}\label{Norm_flj}
\Vert g\,|E^m_{p,\sigma}(B)\Vert\ \sim\ j^\sigma 2^{m(l-j)}\ \Vert g\,|L_p(B)\Vert\hspace{1cm}\forall\, g\in\spa^l_j.
\end{equation}
Note that one can replace $B$ by $B^j$ in \eqref{Norm_flj} due to the construction of $\spa^l_j.$ We deal with three different subspaces to obtain the three estimates from below in \eqref{n0}. Firstly, let $L\in\nat$ such that $L\sim -\frac{1}{m}\log \varepsilon$ and 
\[
S_1:= \spa^L_1.
\]
Then for every $g\in S_1$ we have due to \eqref{Norm_flj}
\[
\Vert g\,|E^m_{p,\sigma}(B)\Vert\ \sim\ 2^{mL}\ \Vert g\,|L_p(B)\Vert.
\]
Hence $\alpha(S_1)\leq 2^{mL}\sim\frac{1}{\varepsilon}.$ This ensures
\[
\mu_0(\varepsilon,B \setminus B_J)\geq \dim S_1 \sim 2^{nL}\sim \varepsilon^{-\frac{n}{m}}.
\]
The second subspace is defined by
\[
S_2:= \bigoplus_{j=1}^J \spa^j_j.
\]
Then for every $g\in S_2$, say $g=\sum\limits_{j=1}^J g_j$ with $g_j\in\spa^j_j,$ we get with \eqref{Norm_flj} 
\[
\Vert g\,|E^m_{p,\sigma}(B)\Vert^p\ 
\sim\ 
\sum_{j=1}^J \Vert g_j\,|E^m_{p,\sigma}(B^j)\Vert^p
\sim\
\sum_{j=1}^J  j^{\sigma p}\Vert g_j\,|L_p(B^j)\Vert^p.
\]
We estimate $j^\sigma$ by $J^{\sigma}$ and obtain $\alpha(S_2)\leq J^\sigma\sim\frac{1}{\varepsilon}.$ Consequently we have
\[
\mu_0(\varepsilon,B \setminus B_J)\geq \dim S_2 \sim J \sim \varepsilon^{-\frac{1}{\sigma}}.
\]
Up to now we have shown
\[
\mu_0(\varepsilon,B \setminus B_J)\geq c\, \varepsilon^{-\max\{\frac{1}{\sigma},\frac{n}{m}\}}.
\]
In the limiting case $\sigma=\frac{m}{n}$ we can refine the decomposition of $B \setminus B_J$ to obtain the $\log$-factor. Define a subspace
\[
S_3:= \bigoplus_{j=1}^J \spa^{l_j}_j
\]
where $l_j\sim j+\frac{1}{n}(\log J -\log j).$ Then for every $g\in S_3$, say $g=\sum\limits_{j=1}^J g_j$ with $g_j\in \spa^{l_j}_j$, it holds because of \eqref{Norm_flj}
\begin{align*}
\Vert g| E^m_{p,\sigma}(B)\Vert^p
&\sim\
\sum\limits_{j=1}^J  \Vert g_j| E^m_{p,\sigma}(B^j)\Vert^p \\
&\sim\
\sum\limits_{j=1}^J j^{\sigma p}\, 2^{m(l_j-j)p}\,\Vert g_j| L_p(B^j)\Vert^p\\
&\sim\
J^{\frac{m}{n}p} \sum\limits_{j=1}^J j^{(\sigma-\frac{m}{n})p}\, \Vert g_j| L_p(B^j)\Vert^p.
\end{align*}
If $\sigma=\frac{m}{n}$ then $j^{(\sigma-\frac{m}{n})p}=1$ and so 
 $\alpha(S_3)\leq J^{\frac{m}{n}}\sim\frac{1}{\varepsilon}$. We conclude for $\sigma=\frac{m}{n}$
\[
\mu_0(\varepsilon,B \setminus B_J)\geq \dim S_3\sim \sum\limits_{j=1}^J 2^{n(l_j-j)}\sim J\sum\limits_{j=1}^J \frac{1}{j} \sim J\log J \sim \varepsilon^{-\frac{n}{m}}|\log \varepsilon|.
\]
The estimates from below in \eqref{n0} follow now from \eqref{n0B}.
\end{proof}

We will transfer the asymptotic behaviour of $\nu_0(\varepsilon,B)$ as $\varepsilon\to 0$ to the corresponding approximation numbers.

\begin{theo}\label{Theo_ak}
Let $n,m\in\nat, 1\leq p<\infty$ and $\sigma>0$. Then the embedding $\id: \Emp\hookrightarrow L_p(B)$ is compact and 
\begin{equation}\label{ak}
a_k(\id)\sim
\begin{cases} 
k^{-\frac{m}{n}}\hspace{1cm}&  \text{ if } \sigma>\frac{m}{n}\\
{k}^{-\frac{m}{n}}\,(\log k)^{\frac{m}{n}}\hspace{1cm}&  \text{ if } \sigma=\frac{m}{n}\\
k^{-\sigma}\hspace{1cm}&  \text{ if } 0<\sigma<\frac{m}{n}.
\end{cases}
\end{equation}
\end{theo}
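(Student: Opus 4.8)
The plan is to derive \eqref{ak} directly from Proposition \ref{nu_0} by inverting the relation between $\nu_0(\varepsilon,B)$ and the approximation numbers. First I recall that the embedding $\id$ is compact precisely for $\sigma>0$ (this is \cite[Theorem 3.3]{Tri12}, see also \cite[Proposition 2.5]{Mi15a}), so the sequence $(a_k(\id))_{k\in\nat}$ is non-increasing and tends to $0$. By Definition \ref{def_brack} applied to $\Omega=B$ (so that $\id_\Omega=\id$), $\nu_0(\varepsilon,B)$ is exactly the counting function of this sequence; since $(a_k(\id))_k$ is monotone this yields, for every $\varepsilon>0$ and every $k\in\nat$,
\[
a_k(\id)\geq\varepsilon\quad\Longleftrightarrow\quad \nu_0(\varepsilon,B)\geq k .
\]
Thus it suffices to convert the bounds $c_1\,g(\varepsilon)\leq\nu_0(\varepsilon,B)\leq c_2\,g(\varepsilon)$ of Proposition \ref{nu_0}, where $g(\varepsilon)$ is the right-hand side of \eqref{n0} and the inequalities hold for all small $\varepsilon>0$, into two-sided bounds for $a_k(\id)$.

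For the two pure-power cases I would argue as follows. Set $\beta:=\tfrac{m}{n}$ if $\sigma>\tfrac{m}{n}$ and $\beta:=\sigma$ if $0<\sigma<\tfrac{m}{n}$; in both cases Proposition \ref{nu_0} gives $c_1\,\varepsilon^{-1/\beta}\leq\nu_0(\varepsilon,B)\leq c_2\,\varepsilon^{-1/\beta}$ for all sufficiently small $\varepsilon>0$. To get $a_k(\id)\leq c\,k^{-\beta}$, fix $k$ large and put $\varepsilon:=A\,k^{-\beta}$ with $A>c_2^{\beta}$; then $\nu_0(\varepsilon,B)\leq c_2A^{-1/\beta}k<k$, so by the equivalence above $a_k(\id)<\varepsilon=A\,k^{-\beta}$. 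To get $a_k(\id)\geq c\,k^{-\beta}$, put $\varepsilon:=a\,k^{-\beta}$ with $0<a<c_1^{\beta}$; then $\nu_0(\varepsilon,B)\geq c_1 a^{-1/\beta}k>k$, hence $a_k(\id)\geq\varepsilon=a\,k^{-\beta}$. Adjusting the constants to absorb the finitely many small values of $k$ yields the first and third line of \eqref{ak}.

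In the borderline case $\sigma=\tfrac{m}{n}$ the same scheme works once one observes that, along the relevant choices $\varepsilon=A\,k^{-m/n}(\log k)^{m/n}$, one has $|\log\varepsilon|\sim\log k$ as $k\to\infty$, so that $\varepsilon^{-n/m}|\log\varepsilon|\leq c\,A^{-n/m}k$ for $k$ large; consequently $\nu_0(\varepsilon,B)\leq c_2c\,A^{-n/m}k<k$ once $A$ is chosen large, which gives $a_k(\id)\leq A\,k^{-m/n}(\log k)^{m/n}$, and the reverse bound follows symmetrically by choosing $A$ small. I expect this logarithmic bookkeeping in the limiting case — verifying that the $\log$-factor inverts correctly and that the multiplicative constants of Proposition \ref{nu_0} survive the inversion — to be the only mildly delicate point, since the substantive work (the bracketing inequality \eqref{brack}, the reduction to annuli via Proposition \ref{prop2}, and the construction of the test subspaces $S_1,S_2,S_3$ in Proposition \ref{nu_0}) has already been carried out.
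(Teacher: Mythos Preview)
Your proposal is correct and follows essentially the same route as the paper: both derive \eqref{ak} from Proposition \ref{nu_0} by inverting the counting-function relation $a_k(\id)\geq\varepsilon\Leftrightarrow\nu_0(\varepsilon,B)\geq k$. The paper's only difference is cosmetic: it treats all three cases at once by writing $\nu_0(\varepsilon,B)\sim\varepsilon^{-\kappa}|\log\varepsilon|^{\rho}$, substituting $\varepsilon=a_k(\id)$ to obtain $k\sim a_k^{-\kappa}|\log a_k|^{\rho}$, and then observing $\log k\sim|\log a_k|$ to solve for $a_k$; your explicit choice of $\varepsilon=Ak^{-\beta}$ (resp.\ $Ak^{-m/n}(\log k)^{m/n}$) is the same inversion carried out by hand, and is arguably cleaner about why $\nu_0(a_k,B)\sim k$ holds.
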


\begin{proof}
First we remark that from $\nu_0(\varepsilon,B)\sim \varepsilon^{-\kappa}|\log\varepsilon|^{\rho}$, $\kappa>0,\rho\in\real,$ it follows $\nu_0(a_k(\id),B)\sim k$. Hence
\begin{equation}\label{ksim}
k\sim a_k(\id)^{-\kappa}|\log a_k(\id)|^{\rho}.
\end{equation}
Then one has in particular
\begin{equation*}
\log k 
\sim 
|\log a_k(\id)|\Big[\kappa +\rho \frac{\log|\log a_k(\id)|}{|\log a_k(\id)|} \Big]
\sim
|\log a_k(\id)|.
\end{equation*}
Inserting this in \eqref{ksim} one obtains for $k\geq 2$
\[
a_k(\id)\sim k^{-\frac{1}{\kappa}} (\log k)^{\frac{\rho}{\kappa}}.
\]
Now \eqref{ak} follows from \eqref{n0}.
\end{proof}

\section{Entropy numbers}\label{sec_ek}

We turn to the entropy numbers $e_k(\id)$ of the compact embedding $\id:\Emp\hookrightarrow L_p(B).$ We have so far the exact behaviour of the approximation numbers in Theorem \ref{Theo_ak}. The corresponding upper bounds for the entropy numbers follow immediately from \cite[Section 1.3.3]{ET96}. On the other hand, similar constructions to those from the proof of Theorem \ref{Theo_ak} lead to the estimates from below of $e_k(\id)$. The result reads as follows.

\begin{theo}\label{theo_ek}
Let $n,m\in\nat, 1\leq p<\infty$ and $\sigma>0$. Then the embedding $\id: \Emp\hookrightarrow L_p(B)$ is compact and 
\begin{equation}\label{ek}
e_k(\id)\sim
\begin{cases} 
k^{-\frac{m}{n}}\hspace{1cm}&  \text{ if } \sigma>\frac{m}{n}\\
{k}^{-\frac{m}{n}}\,(\log k)^{\frac{m}{n}}\hspace{1cm}&  \text{ if } \sigma=\frac{m}{n}\\
k^{-\sigma}\hspace{1cm}&  \text{ if } 0<\sigma<\frac{m}{n}.
\end{cases}
\end{equation}
\end{theo}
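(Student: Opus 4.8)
The plan is to split the proof into the upper and lower estimates, exactly parallel to the situation for approximation numbers but with the roles of $\nu_0$ and $\mu_0$ replaced by direct entropy-number arguments. For the upper bounds I would invoke the general relation between approximation numbers and entropy numbers recorded in \cite[Section 1.3.3]{ET96}: if $a_k(T)\lesssim k^{-\kappa}(\log k)^{\rho}$ for a compact operator $T$ between (quasi-)Banach spaces with $\kappa>0$, then $e_k(T)\lesssim k^{-\kappa}(\log k)^{\rho}$ as well. Applying this to $T=\id$ with the three regimes of Theorem \ref{Theo_ak} immediately yields the three upper estimates in \eqref{ek}. This step is essentially a citation and costs nothing.

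For the lower bounds I would reuse the finite-dimensional subspaces $\spa^l_j$ constructed in the proof of Proposition \ref{nu_0}, together with the volume (Carl--Triebel) argument that controls entropy numbers of identity maps between finite-dimensional spaces. Concretely, on each admitted subspace $S$ one has $\Vert g\,|E^m_{p,\sigma}(B)\Vert\sim\lambda\,\Vert g\,|L_p(B)\Vert$ for a fixed $\lambda=\lambda(S)$ by \eqref{Norm_flj}, so $\id$ restricted to $S$ is, up to the constant $\lambda$, the identity $\ell_p^N\to\ell_p^N$ with $N=\dim S$; hence by the standard lower bound for entropy numbers of finite-dimensional identities one gets $e_k(\id)\gtrsim e_k(\id\colon S\to L_p(B))\gtrsim\lambda^{-1}$ as long as $k\le cN$ for a suitable absolute constant $c$. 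Feeding in the three choices $S_1=\spa^L_1$ with $L\sim-\frac1m\log\varepsilon$, $S_2=\bigoplus_{j=1}^J\spa^j_j$ with $J\sim\varepsilon^{-1/\sigma}$, and $S_3=\bigoplus_{j=1}^J\spa^{l_j}_j$ with $l_j\sim j+\frac1n(\log J-\log j)$ in the limiting case $\sigma=\frac mn$, together with the computations of $\dim S_i$ and $\alpha(S_i)$ already carried out there, one obtains in each regime that $e_k(\id)\gtrsim\varepsilon$ whenever $k\le c\,\nu$ with $\nu$ the corresponding value of $\nu_0(\varepsilon,B)$ from \eqref{n0}. Inverting the relation between $\varepsilon$, $k$ and $\nu$ exactly as in the proof of Theorem \ref{Theo_ak} then gives the three lower estimates in \eqref{ek}.

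The main obstacle is the passage from the norm equivalence on $S_i$ to a genuine entropy-number lower bound: one must be careful that the factor $2^{k-1}$ in the definition of $e_k$ forces the restriction $k\le cN$ with a specific absolute constant, and that this constant does not degrade when the subspace is a direct sum of pieces living on the different annuli $B^j$ (for $S_2$ and $S_3$). Here the disjointness of the supports of the basis functions $f(2^l\cdot-k)$ is essential, since it makes the norm on the whole direct sum an honest $\ell_p$-combination of the norms on the summands, so that $\id|_{S_i}$ really is (a multiple of) $\id\colon\ell_p^N\to\ell_p^N$ and the volume argument applies with a constant independent of $J$. I would also note the minor point that the lower entropy bound for finite-dimensional identities is valid for all $1\le p<\infty$, so no extra care for $p\ne2$ is needed beyond what is already present in the approximation-number argument. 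Once this is in place, the remaining manipulations are the same logarithmic bookkeeping as in Theorem \ref{Theo_ak}, and I would simply refer back to it rather than repeat the computation.
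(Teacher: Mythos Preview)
Your upper-bound step is exactly the paper's: the doubling condition $a_{2^{j-1}}\sim a_{2^j}$ from Theorem \ref{Theo_ak} together with \cite[Section 1.3.3]{ET96} gives $e_k\leq c\,a_k$.

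For the lower bounds your overall strategy (a volume/covering argument on the subspaces $\spa^l_j$ from Proposition \ref{nu_0}) is the paper's, but there is a genuine gap in your treatment of $S_2=\bigoplus_{j=1}^J\spa^j_j$. Your claim that $\id|_{S_2}$ is a scalar multiple of $\id\colon\ell_p^N\to\ell_p^N$ is false: on $\spa^j_j$ the ratio $\Vert g\,|\,E^m_{p,\sigma}\Vert/\Vert g\,|\,L_p\Vert$ is $\sim j^\sigma$ by \eqref{Norm_flj}, and this varies with $j$ from $1$ to $J^\sigma$. Disjointness of supports does make the $L_p$-norm an honest $\ell_p$-combination, but the $E^m_{p,\sigma}$-norm becomes a \emph{weighted} $\ell_p$-norm with weights $j^\sigma$, so $\id|_{S_2}$ is a diagonal operator, not a multiple of the identity. (For $S_1$ and $S_3$ your claim \emph{is} correct: on $S_1$ the factor is $2^{mL}$, and on $S_3$ the special choice of $l_j$ forces $j^\sigma 2^{m(l_j-j)}\equiv J^{m/n}$ for all $j$.)

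The paper sidesteps this by not using $S_2$ for entropy numbers. For the bound $e_k\gtrsim k^{-\sigma}$ it works on a \emph{single} space $\spa^l_j$, where the factor $j^\sigma 2^{m(l-j)}$ is genuinely constant, obtains $e_{N_{l-j}}(\id)\gtrsim j^{-\sigma}2^{-m(l-j)}$ with $N_{l-j}\sim 2^{n(l-j)}$ from the volume comparison, and then optimizes via $l\sim j+\tfrac{\sigma}{n\sigma-m}\log j$ (well defined precisely when $\sigma\neq m/n$) so that $j^{-\sigma}2^{-m(l-j)}\sim N_{l-j}^{-\sigma}$. Your route through $S_2$ can be salvaged---the volume of the weighted $\ell_p$-ball brings in the geometric mean $\bigl(\prod_{j=1}^J j^\sigma\bigr)^{1/J}\sim J^\sigma$, which still yields $e_k\gtrsim k^{-\sigma}$---but this is a different computation from the one you outline, and you would need to carry it out explicitly rather than appeal to the fixed-$\lambda$ identity picture.
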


\begin{proof}
\textbf{Step 1.} 
Considering \eqref{ak}, one has in particular that
\[
a_{2^{j-1}}(\id)\sim a_{2^j}(\id),\hspace{1cm}j\in\nat.
\]
Now one can apply \cite[Section 1.3.3, p.15]{ET96} with reference to \cite{Tri94} and gets
\[
e_k(\id)\leq c\, a_k(\id),\hspace{1cm}k\in\nat.
\]
\textbf{Step 2.} 
By decomposition of $\id$ and \eqref{res_class}, we clearly get due to the multiplicativity of entropy numbers the classical lower estimate 
\[
k^{-\frac{m}{n}}
\sim e_k(\overset{\circ\hspace{0.3cm} }{W^m_p}(B)\hookrightarrow L_p(B))
\leq c\, e_k(\id).
\]
We claim
\[
e_k(\id)\geq c\ k^{-\sigma}.
\]
We adapt arguments from \cite{HT94} or rather \cite[Theorem 4.3.2, Step 1]{ET96}. Let 
\begin{equation}\label{flj2}
f^l_j(x):= \sum\nolimits^{l,j} a_k f(2^lx-k), \hspace{1cm}a_k\in\mathbb{C},\, j,l\in\nat, l\geq j,
\end{equation}
where $f\in S'(\rn)$ is such that $\supp f\subset[-1,1]^n$. The sum $\sum\nolimits^{l,j}$ is taken over all lattice points $k\in\zn$ such that $2^{-l}k\in B^j$. The number of the subcommands is $N_{l-j}= 2^{n(l-j)}$ (neglecting constants). As before in the proof of Proposition \ref{nu_0}, we assume that $f(2^l\cdot-k)$ have disjoint supports. We obtain
\begin{equation}\label{1}
\Vert f^l_j\,|L_p(B)\Vert
\sim  
2^{-l\frac{n}{p}} \big(\sum\nolimits^{l,j} |a_k|^p\big)^{\frac{1}{p}}
\end{equation}
and
\begin{equation}\label{2}
\Vert f^l_j\,|E^m_{p,\sigma}(B)\Vert  \sim j^{\sigma} 2^{m(l-j)}2^{-l\frac{n}{p}} \big(\sum\nolimits^{l,j} |a_k|^p\big)^{\frac{1}{p}}.
\end{equation}
Due to the definition of entropy numbers, there exist $2^{N_{l-j}}$ balls $K^i$, $i=1,...,2^{N_{l-j}}$ in $L_p(B)$ with radius $\tilde{\varepsilon}=2e_{N_{l-j}}(\id)$ which cover the unit ball $U$ of $\Emp.$ For any one of these balls $K=K^i$ it holds
\[
\vol(K\cap\spa^l_j)\leq c\big[2 e_{N_{l-j}}(\id) 2^{l\frac{n}{p}}\big]^{N_{l-j}}\vol(U^{N_{l-j}}_p)
\]
where $U^N_p$ is the unit ball in $\ell_p^N$. We can estimate
\begin{align}\notag
\vol(U\cap\spa^l_j)
&\leq 
\sum_{i=1}^{2^{N_{l-j}}} \vol(K^i\cap\,\spa^l_j)\\ \label{3}
&\leq
c\, 2^{N_{l-j}}\big[2\,e_{N_{l-j}}(\id)\,2^{l\frac{n}{p}}\,\big]^{N_{l-j}} \vol(U^{N_{l-j}}_p).
\end{align}
The left hand side is equivalent to $[j^{-\sigma}2^{-m(l-j)}2^{l\frac{n}{p}}]^{N_{l-j}}\vol(U^{N_{l-j}}_p)$ and so we have
\begin{equation}\label{1.3}
j^{-\sigma}2^{-m(l-j)} \leq c\, e_{N_{l-j}}(\id).
\end{equation}
If $\sigma=\frac{m}{n}$ we choose $j=1.$ Otherwise let $l$ and $j$ be such that 
\begin{align*}
l\sim j +\tfrac{\sigma}{n\sigma - m}\log j 
&\ \Longleftrightarrow\
(n-\tfrac{m}{\sigma})(l-j) \sim \log j\\
&\ \Longleftrightarrow\
2^{n(l-j)}2^{-\tfrac{m}{\sigma}(l-j)} \sim j\\
&\ \Longleftrightarrow\
2^{n(l-j)} \sim j\, 2^{\tfrac{m}{\sigma}(l-j)}\\
&\ \Longleftrightarrow\
[N_{l-j}]^{-\sigma} \sim j^{-\sigma}\, 2^{-m(l-j)}.
\end{align*}
Then \eqref{1.3} leads to
\[
e_k(\id)\geq c\, k^{-\sigma}.
\]
\textbf{Step 3.} We prove the limiting case $\sigma=\frac{m}{n}.$ We  fix $J\in\nat$ and construct in each annulus $B^j, j=1,...,J$ functions of type \eqref{flj2} such that the size of the lattice depends on $j$. Namely, consider
\[
f^J(x):= \sum_{j=1}^J f^{l_j}_j(x),\hspace{1cm} b_j\in\mathbb{C},
\]
such that $l_j\sim j + \tfrac{1}{n}(\log J - \log j)$. Let  
\[
\spa^J:=\spa\big\{f(2^{-l_j}x -\, k): k\in\zn, 2^{-l_j}k\in B^j,\, j=1,...,J\big\}.
\] 
Then
\[
\dim\spa^J 
\sim\, 
\sum_{j=1}^J 2^{n(l_j-j)}
\sim\, 
J\sum_{j=1}^J\frac{1}{j} 
\sim\,
J\log J. 
\]
We have the following counterparts of \eqref{1} and \eqref{2} with modified coefficients $b_k=2^{-l_j\frac{n}{p}}a_k$
\[
\Vert f^J| L_p(B)\Vert 
\sim 
\sum_{j=1}^J \Vert f^{l_j}_j| L_p(B)\Vert
\sim
\big(\sum\nolimits^\ast |b_k|^p \big)^{\frac{1}{p}}.
\]
Since $ J^{\frac{m}{n}}\sim j^{\sigma}2^{m(l_j-j)}$ if $\sigma=\frac{m}{n}$, we obtain
\begin{align*}
\Vert f^J| E^m_{p,\sigma}(B)\Vert
&\sim
\sum_{j=1}^J \Vert f^{l_j}_j| E^m_{p,\sigma}(B)\Vert
\sim
\sum_{j=1}^J j^{\sigma}2^{m(l_j-j)}\Vert f^{l_j}_j| L_p(B)\Vert\\
&\sim
J^{\frac{m}{n}}\big(\sum\nolimits^\ast |b_k|^p \big)^{\frac{1}{p}}.
\end{align*}
The sum $\sum\nolimits^\ast$ is taken over $N^J\sim J\log J$ summands. Now we are in the same situation as in Step 2. For $2^{N^J}$ balls $(K^i)$, $i=1,...,2^{N^J},$ with radius $\tilde{\varepsilon}=2 e_{N^J}(\id)$, which cover the unit ball $U$, it holds as a counterpart of \eqref{3}
\[
\vol(U\cap\spa^J)
\leq 
\sum_{i=1}^{2^{N^J}}\vol(K^i\cap\spa^J)
\leq
c\, 2^{N^J} \big[2e_{N^J}(\id)\big]^{N^J}\vol(U^{N^J}_p).
\]
Similarly as in Step 2, the left hand side is equivalent to $[J^{-\frac{m}{n}}]^{N^J}\vol(U^{N^J}_p).$ Hence, we showed that
\[
J^{-\frac{m}{n}}\leq c\, e_{N^J}(\id).
\]
Finally,
\[
N^J\sim J\log J
\ \Longleftrightarrow\
J^{-\frac{m}{n}}\sim (N^J)^{-\frac{m}{n}}(\log N^J)^{\frac{m}{n}} 
\]
completes the proof.
\end{proof}


\end{document}